\DeclareMathOperator{\dist}{\operatorname{dist}}
\def\divdots{\rlap{\raisebox{-1pt}{.}}{\rlap{\raisebox{2pt}{.}}\raisebox{5pt}{.}}}
\def\ndivby{\mathrel{
    \divdots
    \kern-0.35em\raise0.22ex\hbox{/}
}}
\renewcommand{\leq}{\leqslant}
\renewcommand{\geq}{\geqslant}
\def\fs{\kern 0.5em}
\newcounter{prcnt}
\newcounter{pucnt}
\newcommand{\prmain}[1]{ 
    \medskip%
    \setcounter{pucnt}{0}%
    \stepcounter{prcnt}%
    \noindent\textbf{%
    \theprcnt%
    \ifthenelse{\equal{#1}{1}}{*}{}%
    .}\fs%
}
\newcommand{\pumain}[1]{
    \stepcounter{pucnt}{%
    \noindent\bf(\alph{pucnt}%
    \ifthenelse{\equal{#1}{1}}{*}{}%
    )}\fs%
}
\newtheorem{theorem}{Theorem}
\newtheorem{lemma}{Lemma}
\newtheorem*{theorem*}{Теорема}
\newtheorem{corollary}{Corollary}
\theoremstyle{definition}
\pgfplotsset{compat=1.17}
\title{On the chromatic number of 2-dimensional spheres}
\author{Danila Cherkashin$^\mathrm{a,b,c}$,~
 Vsevolod Voronov$^{\mathrm{d}}$\\
{\tiny ~a. Chebyshev Laboratory, St. Petersburg State University, 14th Line V.O., 29, Saint Petersburg 199178 Russia}\\
{\tiny ~b. Moscow Institute of Physics and Technology, Laboratory of Combinatorial and Geometric Structures}\\ 
{\tiny ~c. St. Petersburg Department of Steklov Mathematical Institute of Russian Academy of Sciences
27 Fontanka, St. Petersburg, Russia }\\
{\tiny ~d. Caucasus Mathematical Center of Adyghe State University }}
\date{}
\begin{document}

\maketitle

\begin{abstract}
In 1976 Simmons conjectured that every coloring of a 2-dimensional sphere of radius strictly greater than $1/2$ in three colors 
has a pair of monochromatic points at the distance 1 apart. We prove this conjecture.
\end{abstract}

\section{Introduction}

A \textit{coloring} of a given set $M$ is a map from $M$ to the set of colors.
A coloring of a subset $M$ of a metric space is \textit{proper} if no pair of monochromatic points lie at distance 1 apart.
The minimum number of colors that admits a proper coloring of $M$ is called \textit{the chromatic number} of $M$; we denote it by $\chi (M)$. 
In the case of $M\subset\mathbb{R}^n$, the distance typically comes from the induced Euclidean metric on $M$.

A slightly different point of view is to consider a \textit{unit distance graph} $G(M)$: the points of $M$ are the vertices of $G(M)$ and edges connect points at unit distance apart. 
By definition, $\chi(M) = \chi(G(M))$.
The de~Bruijn--Erd{\H o}s theorem states that if $\chi(M)$ is finite then there is a finite subgraph $H$ of $G(M)$ such that $\chi(H) = \chi(G(M))$.

Denote by $S^2(r)$ the two-dimensional sphere of radius $r$ in $\mathbb{R}^3$ centered at the origin.
Let $\chi(S^2(r))$ be the chromatic number of $S^2(r)$ with respect to the Euclidean metric. 
Obviously if $r < 1/2$ and $r = 1/2$ then the chromatic number is equal to 1 and 2, respectively.
Note that for any $r>\frac{1}{2}$ there is $r_1<r$ such that $S^1(r_1)$ contains an odd cycle. Since $S^1(r_1) \subset S^2(r)$, we obtain that $\chi(S^2(r)) \geq 3$. 
G.~Simmons~\cite{simmons1976chromatic} proved that 
\[
    \chi(S^2(r)) \geq 4 \quad \mbox{for} \quad r \geq \frac{\sqrt{3}}{3}.
\]
In the proof, Simmons constructs certain subgraphs of $G(S^2(r))$ that contain triangles. Obviously, for smaller values of the radius $G(S^2(r))$ is triangle-free, and so other ideas are needed.

Then L. Lov{\'a}sz~\cite{Lovasz} generalized the odd cycle construction to an arbitrary dimension, showing that for every $n \geq 3$ there exists a family of \emph{strongly self-dual polytopes} inscribed in $S^{n-1}(r)$ whose graphs of diameters have chromatic number $n+1$ and that $r$ can be arbitrarily close to $\frac{1}{2}$.
In our notation this result can be formulated as follows:

\begin{theorem}[Lov{\'a}sz,~\cite{Lovasz}]
For every $n \geq 2$ there exists a monotonically decreasing sequence $r_k^{(n)}, k = 1,2, \dots$, such that 
\[
\lim_{k \to \infty} r_k^{(n)}=\frac{1}{2} \quad \mbox{and} \quad \chi\left(S^{n-1}\left(r_k^{(n)}\right)\right) \geq n+1.
\]
\end{theorem}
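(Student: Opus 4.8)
The plan is to reduce the statement to the existence of a family of highly symmetric polytopes and then to control the chromatic numbers of their ``diameter graphs''. Write $d=n-1$. The case $d=1$ is the model, and is essentially the odd-cycle remark of the introduction: the regular $(2m{+}1)$-gon inscribed in $S^1(\rho)$ has all of its short diagonals $v_iv_{i+m}$ of one common length $2\rho\cos\tfrac{\pi}{2(2m+1)}$, so for $\rho=\rho_m:=\tfrac12\sec\tfrac{\pi}{2(2m+1)}$ these diagonals are unit segments; since $\gcd(m,2m{+}1)=1$ they form a single $(2m{+}1)$-cycle, hence $\chi(S^1(\rho_m))\ge 3$, while $\rho_m$ decreases to $\tfrac12$. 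For general $n$ I would look for a family $P_k\subset\mathbb R^{n}$ of vertex-transitive polytopes inscribed in spheres, replacing the odd cycle by the \emph{diameter graph} $D(P_k)$ (the graph on the vertices of $P_k$ whose edges join the pairs at maximal mutual distance). By vertex-transitivity all these diameters share one length, so after rescaling $P_k$ to have diameter $1$ the graph $D(P_k)$ is realized as a unit-distance subgraph of $S^{n-1}(r_k)$, where $r_k$ is the circumradius-to-diameter ratio of $P_k$; one then needs $r_k\to\tfrac12$ and $\chi(D(P_k))\ge n+1$.

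For the geometric realization the natural first attempt is an explicit ``circulant'' (trigonometric-moment) construction: let the vertices $v_j$, $j\in\mathbb Z/m$, be the points of $S^{n-1}$ whose coordinates are proportional to $\cos(\omega_\ell j),\sin(\omega_\ell j)$ over a chosen set of frequencies $\omega_\ell$, so that $\langle v_i,v_j\rangle=f\big((i-j)\tfrac{2\pi}{m}\big)$ for a fixed even trigonometric polynomial $f$. Then the maximal distance is attained precisely on the shifts $s$ minimizing $f$ on the grid $\{s\cdot\tfrac{2\pi}{m}\}$, and all diameter pairs are equilateral automatically. The circumradius-to-diameter ratio equals $\|v_0\|\big/\sqrt{2(\|v_0\|^2-\min f)}$, so to push it to $\tfrac12$ I need the extreme pair of vertices to become asymptotically antipodal, i.e. the discrete minimum of $f$ to tend to $-\|v_0\|^2$; taking the frequencies to be odd multiples of the fundamental $2\pi/m$ pins the global minimum of $f$ at angle $\pi$, and the discrete minimum then tends to this value as $m\to\infty$ through an appropriate residue class (generalizing ``$m$ odd''). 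Already here there is a tension: the discrete minimum must be attained not at a single shift --- which would make $D(P_k)$ a disjoint union of cycles, useless for $n\ge4$ --- but at enough shifts for $D(P_k)$ to stand a chance of being $(n{+}1)$-chromatic, and reconciling this with the constraint that $f$ be a genuine spherical autocorrelation function is the first real difficulty.

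The harder, and central, point is the lower bound $\chi(D(P_k))\ge n+1$. One cannot get it by applying the Lusternik--Schnirelmann--Borsuk theorem directly to a proper colouring, the way one proves that the Borsuk graph of the whole sphere $S^{n-1}$ (near-antipodal points declared adjacent) has chromatic number exactly $n+1$: the vertices $v_j$ sweep out only a $1$-dimensional curve on $S^{n-1}$, so an $n$-colouring of them does not cover the sphere. Instead the construction has to be arranged so that each $P_k$ is \emph{strongly self-dual}, i.e. carries a polarity (reciprocation in a concentric sphere) inducing an incidence-reversing bijection between its vertices and its facets, under which $D(P_k)$ is essentially the ``vertex versus opposite facet'' graph. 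This self-duality is what puts a free $\mathbb Z_2$-action, and enough connectivity, on the neighbourhood (box) complex of $D(P_k)$ for Lovász's topological lower bound on the chromatic number to yield $n+1$ --- morally, a Borsuk--Ulam argument transplanted from the sphere onto the face lattice of $P_k$. I expect the main obstacle of the whole proof to be exactly this: exhibiting polytopes $P_k$ that are at once strongly self-dual, have a diameter graph of the required kind, and have circumradius-to-diameter ratio tending to $\tfrac12$, together with the connectivity computation that makes the topological bound apply.

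Finally I would assemble: from such a family, $\chi\big(S^{n-1}(r_k)\big)\ge\chi(D(P_k))\ge n+1$ with $r_k\to\tfrac12$, and passing to a subsequence gives a strictly decreasing sequence $r_k^{(n)}$ as required. One loose end is the parity of $n$ --- the circulant construction lives most naturally in even-dimensional ambient space, so for odd $n$ one appends one extra ($\pm1$-type) coordinate or performs a single suspension step that increases dimension and chromatic number together; this is a routine variant of the same construction.
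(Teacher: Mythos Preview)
The paper does not prove this theorem at all: it is quoted as a result of Lov\'asz and attributed to~\cite{Lovasz}, with no argument given beyond a one-sentence description (``a family of \emph{strongly self-dual polytopes} inscribed in $S^{n-1}(r)$ whose graphs of diameters have chromatic number $n+1$ and \dots\ $r$ can be arbitrarily close to $\tfrac12$''). So there is nothing in the paper to compare your attempt against line by line.

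That said, your sketch is faithful to that one-sentence description and to Lov\'asz's actual approach: you correctly identify the two pillars --- (i) a family of vertex-transitive, strongly self-dual polytopes whose circumradius-to-diameter ratio tends to $\tfrac12$, and (ii) a Borsuk--Ulam/topological lower bound on the chromatic number of their diameter graphs via the self-duality --- and you are honest that the hard content sits in manufacturing the polytopes and in the connectivity computation. Your $n=2$ warm-up with odd $(2m{+}1)$-gons is exactly the case the paper itself invokes before stating the theorem. The circulant/trigonometric-moment heuristic you describe is a reasonable way to guess candidate vertex sets, though in Lov\'asz's paper the construction of the self-dual polytopes is carried out differently (more combinatorially, via an inductive/recursive scheme on the face lattice rather than by tuning frequencies of a moment curve); the parity-of-$n$ patch you mention is handled there as well. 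None of what you wrote is wrong as a plan, but it remains a plan: the existence of the strongly self-dual family with the required ratio, and the topological chromatic bound, are precisely the nontrivial theorems you would have to import or reprove, and the paper under review simply cites them.
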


Since $S^{n-1}(r_1) \subset S^{n}(r)$ for $r_1 \leq r$, we get the following inequality.

\begin{corollary}
\[\chi(S^{n-1}(r)) \geq n \quad \mbox{for} \quad r > \frac{1}{2}.\]
\end{corollary}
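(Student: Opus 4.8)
The plan is to obtain the bound as an immediate consequence of the Theorem, once we fix the correct dimension shift and a cross-section argument. The whole point is that a sphere of a given radius contains, as an isometrically embedded subset, every sphere of one dimension lower whose radius does not exceed it, so the $n{+}1$ chromatic number in dimension $n-1$ provided by the Theorem transfers down to an $n$ chromatic number on $S^{n-1}(r)$ for all $r>\tfrac12$.

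First I would record the following elementary fact. Write $S^{n-1}(r)=\{x\in\mathbb{R}^n:\,\lVert x\rVert=r\}$ and intersect it with the affine hyperplane $\{x_n=t\}$ for $t$ with $\lvert t\rvert\leq r$; the intersection is $\{(x_1,\dots,x_{n-1},t):\,x_1^2+\dots+x_{n-1}^2=r^2-t^2\}$, a Euclidean sphere of radius $\sqrt{r^2-t^2}$ lying in an $(n-1)$-dimensional affine subspace, and the metric it inherits from $\mathbb{R}^n$ is exactly its own Euclidean metric. Taking $t=\sqrt{r^2-\rho^2}$ for any $\rho\leq r$ shows that $S^{n-2}(\rho)$ is isometric to a subset of $S^{n-1}(r)$; hence $G(S^{n-2}(\rho))$ is an (induced) subgraph of $G(S^{n-1}(r))$, so $\chi\big(S^{n-1}(r)\big)\geq\chi\big(S^{n-2}(\rho)\big)$ for every $\rho\leq r$. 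This is exactly the containment $S^{n-1}(r_1)\subset S^{n}(r)$ quoted just before the statement, here used one dimension lower.

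Next I would apply the Theorem with $n$ replaced by $n-1$, which is legitimate as soon as $n-1\geq 2$, i.e.\ $n\geq 3$: there is a monotonically decreasing sequence $r_k^{(n-1)}\to\tfrac12$ with $\chi\big(S^{n-2}(r_k^{(n-1)})\big)\geq (n-1)+1=n$ for all $k$. Given any $r>\tfrac12$, since $r_k^{(n-1)}$ decreases to $\tfrac12<r$, there is an index $k_0$ with $r_k^{(n-1)}\leq r$ for all $k\geq k_0$; fix one such index and put $\rho:=r_{k_0}^{(n-1)}\leq r$. Combining this with the cross-section bound of the previous paragraph gives $\chi\big(S^{n-1}(r)\big)\geq\chi\big(S^{n-2}(\rho)\big)\geq n$, as required. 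The leftover case $n=2$ is trivial: for $r>\tfrac12$ the circle $S^1(r)$ has two points at distance $1$, so $\chi(S^1(r))\geq 2$.

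I do not expect a genuine obstacle here: this is a soft corollary of the Theorem. The only points needing (minimal) care are that the hyperplane section is an \emph{isometric} copy of the lower-dimensional sphere, so that unit-distance pairs are preserved rather than merely a homeomorphic copy, and that the convergence $r_k^{(n-1)}\to\tfrac12$ is from above, which is precisely what guarantees some term of the sequence lies at or below the prescribed radius $r$.
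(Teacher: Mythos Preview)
Your argument is correct and is exactly the one the paper has in mind: the single sentence ``Since $S^{n-1}(r_1)\subset S^{n}(r)$ for $r_1\leq r$'' preceding the corollary is precisely your hyperplane cross-section observation, combined with the Lov\'asz theorem one dimension down. Your explicit treatment of the dimension shift and the leftover case $n=2$ just fills in details the paper leaves implicit.
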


Some sources state that the chromatic number of a two-dimensional sphere $S^2(r)$ is known only for $r\leq \frac{1}{2}$ and for $r = \frac{\sqrt{2}}{2}$ \cite{Jensen, Malen}. But it should be clarified that the equality $\chi(S^2(r))=n+1=4$ is true for $r \in  \{r^{(3)}_k\} \cap \left(\frac{1}{2}, \frac{\sqrt{3-\sqrt{3}}}{2}\right]$. Explicit formulas for algebraic numbers $r^{(3)}_k$, if such exist, seem to be too complicated, but it is not difficult to compute $r^{(3)}_k$ for a given $k$ with an arbitrary precision by approximately solving a certain optimization problem. For example, the first non-trivial construction in the case of a two-dimensional sphere corresponds to a unit distance embedding of the Gr{\" o}tzsch graph at $r = 0.54003829...$



It is worth noting that chromatic numbers in high dimensions were studied using algebraic, topological and combinatorial methods.
A.M.~Raigorodskii~\cite{Rai12} showed that for every fixed $r>1/2$ the chromatic number of an $n$-dimensional sphere grows exponentially with $n$.  O.~Kostina~\cite{kostina} refined asymptotic lower bounds.  R.~Prosanov~\cite{Prosanov} gave a new asymptotic upper bound.
The paper of A.~Kupavskii~\cite{kupavskii} contains several results on the number of different colors on a sphere of given radius in every proper coloring of $\mathbb{R}^n$.

A lot of results on colorings of 2-dimensional spheres were obtained by Simmons~\cite{simmons1976chromatic}. 
Recent discovery of a 5-chromatic unit distance subgraph of the Euclidean plane~\cite{deGrey} spurred 
interest to the topic and in particular to the chromatic number of a 2-dimensional sphere. 

Among the other results, in~\cite{sphere5chr} the authors constructed several 5-chromatic subgraphs of 2-dimensional spheres, which lead to the bounds
\[
\chi(S^2(r_1)) \geq 5 \quad \mbox{where} \quad  r_1 = \cos \frac{3\pi}{10} =  \frac{\sqrt{5-\sqrt{5}}}{2\sqrt{2}} = 0.58778\dots ;\
\]
\[
\chi(S^2(r_2)) \geq 5 \quad \mbox{where} \quad r_2 = \cos \frac{\pi}{10}=\frac{\sqrt{5+\sqrt{5}}}{2\sqrt{2}} = 0.95105\dots.
\]
The paper~\cite{Sirgedas} contains a family of proper colorings of $S^2(r)$ spheres in 7 colors, provided $r$ is large enough.

The following statement was formulated by Simmons as a conjecture~\cite{simmons1976chromatic}. The proof of Simmons' conjecture is the main result of the present paper.
\begin{theorem}
For every $r > \frac{1}{2}$ we have
\[
\chi(S^2(r))\geq 4.
\]
\end{theorem}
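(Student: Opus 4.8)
The plan is to reduce, via the de~Bruijn--Erd{\H o}s theorem, to producing for every $r>\tfrac12$ a \emph{finite} $4$-chromatic subgraph of the unit-distance graph $G(S^2(r))$, and then to split into the two regimes determined by triangles. Three points of $S^2(r)$ at pairwise Euclidean distance $1$ have Gram matrix $(r^2-\tfrac12)J+\tfrac12 I$ (with $J$ the all-ones matrix), whose least eigenvalue is $3r^2-1$; hence such a triangle exists if and only if $r\ge\tfrac{\sqrt3}{3}$. For $r\ge\tfrac{\sqrt3}{3}$ we are exactly in the range covered by Simmons, whose triangle-containing subgraphs already yield $\chi(S^2(r))\ge4$, so the whole problem reduces to the triangle-free window $\tfrac12<r<\tfrac{\sqrt3}{3}$, in which we must exhibit a finite \emph{triangle-free} $4$-chromatic unit-distance graph realized on $S^2(r)$.

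The natural building block for that window is the Gr{\"o}tzsch graph, the Mycielskian of a $5$-cycle --- the smallest triangle-free $4$-chromatic graph --- which, as recalled above, admits a unit-distance realization on $S^2(r)$ for the single radius $r=0.54003\ldots$. One first tries the symmetric realization of the Mycielskian of a long odd cycle $C_{2k+1}$: place its $2k+1$ cycle vertices as a regular star-polygon on a latitude circle, realize each shadow vertex as the reflection of the corresponding cycle vertex across the plane spanned by its two cycle-neighbours and the origin, and place the apex at a pole. Whenever such a symmetric realization exists it is rigid: the Mycielskian of $C_{2k+1}$ has exactly $m=2n-2$ edges, one more than the threshold $m\le 2n-3$ needed for the variety of realizations to stay positive-dimensional once the radius is also allowed to vary. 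Concretely, the shadow vertices lie on a common latitude circle and the apex --- being equidistant from all of them --- is forced to a pole, which is at unit distance from them only for one prescribed value of that latitude; this singles out isolated radii. Such an argument therefore only reproves $\chi(S^2(r))\ge4$ on a countable subset of $(\tfrac12,\tfrac{\sqrt3}{3})$, which is insufficient.

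To reach an honest interval of radii one must introduce slack. Two sources are available: letting the odd cycle wind over the sphere rather than lying on a single circle frees up vertex parameters, and --- more importantly --- replacing the rigid Mycielskian by a family of triangle-free $4$-critical graphs that are \emph{sparse}, with $m\le 2n-3$ and with no over-constrained subgraph, so that the locus of unit-distance realizations in $(S^2)^n\times(\tfrac12,\tfrac{\sqrt3}{3})$, modulo the $3$-dimensional rotation group, is positive-dimensional and dominates the radius axis. An implicit-function/transversality argument then gives a genuine, vertex-injective realization over an open interval of $r$, on which $\chi(S^2(r))\ge4$; a sequence of such graphs whose complexity grows as $r\to\tfrac12$ then produces open intervals which, one checks, cover all of $(\tfrac12,\tfrac{\sqrt3}{3})$, and together with Simmons' range this finishes the proof.

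The technical heart of the argument --- and where I expect the main obstacle --- is this last step: actually constructing triangle-free $4$-chromatic (ideally $4$-critical) graphs that are simultaneously sparse and flexible enough to be realizable over a whole interval of radii (naive Mycielskians of odd cycles, being rigid, do not suffice), verifying that the realization is non-degenerate and that the fourth colour is genuinely forced --- which may well require computer-assisted verification of the chromatic numbers of the base graphs --- controlling the degeneration (edges becoming nearly antipodal) as $r\to\tfrac12$ so that the intervals reach arbitrarily close to $\tfrac12$, and arranging the family so that the union of intervals leaves no gap inside $\bigl(\tfrac12,\tfrac{\sqrt3}{3}\bigr)$.
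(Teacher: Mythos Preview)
Your proposal is a program, not a proof, and the gap you yourself flag is genuine and, as far as anyone knows, unresolved. You correctly observe that the symmetric Mycielskian realizations are rigid and hit only a countable set of radii, and you propose to replace them by sparser triangle-free $4$-critical graphs whose realization locus is positive-dimensional. But you do not construct any such family, you do not establish that the resulting open intervals of radii actually cover all of $\bigl(\tfrac12,\tfrac{\sqrt3}{3}\bigr)$, and you do not control the degeneration as $r\to\tfrac12$. Each of these is a serious obstacle: there is no known explicit family of finite $4$-chromatic unit-distance subgraphs of $S^2(r)$ parametrized by all $r$ in this range, and the paper explicitly remarks that its own proof does \emph{not} produce one either. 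So the ``technical heart'' you identify is not merely technical; it is the whole problem, and your outline does not solve it.

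The paper's argument sidesteps this entirely, and the comparison is instructive. It never tries to build a finite $4$-chromatic subgraph. Instead it argues by contradiction: assume a proper $3$-coloring of $S^2(r)$. A Borsuk--Ulam argument (applied to $x\mapsto(\dist(x,\overline{C_{\mathrm{red}}}),\dist(x,\overline{C_{\mathrm{blue}}}))$), together with a local analysis of how colors meet on circles of the ``forbidden'' diameter, forces every color class to be \emph{dense} in the sphere. Then one uses a much weaker graph than a $4$-chromatic one: the graph $G_k$ consisting of an odd $(2k+1)$-cycle $y_1,\dots,y_{2k+1}$ with a single pendant leaf $x_i$ attached to each $y_i$. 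This graph is only $3$-chromatic. The point is that it admits an explicit unit-distance embedding into $S^2(r)$ for every $r$ in the range (two parallel regular $(2k+1)$-gons), and an implicit-function computation shows the embedding is stable: for any small perturbation of the leaves $x_i$ one can re-embed the cycle $y_i$. Density of the red class then lets you slide every $x_i$ onto a red point, which forces the odd cycle on the $y_i$ to be properly $2$-colored --- a contradiction.

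In short: the paper trades your hard global covering problem (make finite $4$-chromatic graphs hit every radius) for a soft topological fact (density via Borsuk--Ulam) plus an easy flexibility computation on a $3$-chromatic gadget. That is the missing idea in your approach.
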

We note that for $\frac{1}{2} < r \leq \frac{\sqrt{3-\sqrt{3}}}{2}=0.563\dots$ a proper 4-coloring of $S^2(r)$ can be obtained from a partition of the sphere into four equal spherical triangles~\cite{simmons1976chromatic}. It implies the following corollary.
\begin{corollary}
$\chi(S^2(r)) = 4$ for $\frac{1}{2} < r \leq\frac{\sqrt{3-\sqrt{3}}}{2} = 0.563\dots$.
\end{corollary}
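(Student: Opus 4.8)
The lower bound is immediate: Theorem~2 (Simmons' conjecture, proved above) already gives $\chi(S^2(r))\geq 4$ for every $r>\tfrac12$, so for the corollary it remains only to exhibit a proper $4$-colouring of $S^2(r)$ when $\tfrac12<r\leq\tfrac{\sqrt{3-\sqrt3}}{2}$. The plan is to use the tiling of the sphere by an inscribed regular tetrahedron, following Simmons~\cite{simmons1976chromatic}. Take the four points $V_j=\tfrac{r}{\sqrt3}(\varepsilon_1,\varepsilon_2,\varepsilon_3)$ with $\varepsilon_i\in\{\pm1\}$ and $\varepsilon_1\varepsilon_2\varepsilon_3=1$; these are the vertices of a regular tetrahedron inscribed in $S^2(r)$, its six geodesic edges $[V_aV_b]$ partition $S^2(r)$ into four pairwise congruent geodesic triangles $F_1,\dots,F_4$ (with $F_i$ the face whose vertex set is $\{V_j:j\neq i\}$), and with these coordinates the midpoint of each edge $[V_aV_b]$ is one of the six points $\pm re_1,\pm re_2,\pm re_3$. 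I would colour the relative interior of $F_i$ with colour $i$ and assign every skeleton point (a vertex or an interior point of an edge) to one of the faces containing it, leaving the precise choice until the end. Then each colour class is the interior of one face plus some skeleton points, so any monochromatic pair lies in the closure of a single $F_i$, and the colouring is proper as soon as no face contains a monochromatic pair at distance exactly $1$ — which we control via the Euclidean diameters of the faces.

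The heart of the argument is the diameter computation. By congruence take $F=F_1$. For $P,X\in S^2(r)$ we have $|P-X|^2=2r^2-2\langle P,X\rangle$, so $(\operatorname{diam}F)^2=2r^2-2\min\{\langle P,X\rangle:P,X\in F\}$. The face $F$ lies in the open hemisphere $\{x:\langle x,V_1\rangle<0\}$ (because $\langle V_j,V_1\rangle=-r^2/3<0$ for $j\neq1$ and $\langle\,\cdot\,,V_1\rangle$ stays negative along the short arcs bounding $F$), so $F$ is geodesically convex; minimising the linear functional $X\mapsto\langle P,X\rangle$ over $F$ (attained at a vertex or on an edge) and then over $P\in F$ leaves only a few distinguished pairs to compare. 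A direct computation with the coordinates above shows the extremal pair is a vertex together with the midpoint of the opposite edge: $\min\langle P,X\rangle=-r^2/\sqrt3$, attained e.g.\ at $V_2$ and the midpoint $-re_1$ of $[V_3V_4]$, so $\operatorname{diam}F=r\sqrt{2\bigl(1+\tfrac1{\sqrt3}\bigr)}$. Hence $\operatorname{diam}F\leq1$ iff $r^2\leq\tfrac{3-\sqrt3}{4}$, i.e.\ iff $r\leq\tfrac{\sqrt{3-\sqrt3}}{2}$, and for $r$ strictly below this value the diameter is attained only on the boundary. Consequently for $r<\tfrac{\sqrt{3-\sqrt3}}{2}$ every face has diameter $<1$, no two points of one face are at distance $1$, and the colouring is proper for any skeleton assignment; the open interval is done.

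It remains to treat the endpoint $r=\tfrac{\sqrt{3-\sqrt3}}{2}$, where the skeleton assignment must be chosen deliberately: there, and only there, every vertex $V_j$ is at distance exactly $1$ from each of the three edge–midpoints of the opposite face $F_j$, while those three midpoints are pairwise at distance $r\sqrt2<1$. I would then check, by a small finite argument, that the four vertices and six midpoints can be distributed among their incident faces so that in every such unit pair the two endpoints receive different colours (each vertex has three admissible faces, each midpoint two, and one just avoids the single orientation of the pattern that would forbid a vertex all its colours). With such an assignment the colouring is proper for all $r\leq\tfrac{\sqrt{3-\sqrt3}}{2}$, which together with Theorem~2 proves the corollary. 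The only genuinely non-routine step is the diameter computation: the natural first guesses — that the extremal pair is a pair of vertices (which would give the larger threshold $\tfrac{\sqrt6}{4}$) or a pair of edge–midpoints — are both wrong, and it is the vertex-to-opposite-midpoint distance that governs the answer; the endpoint bookkeeping is fiddly but elementary.
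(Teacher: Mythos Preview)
Your approach is exactly the paper's: the lower bound is Theorem~2, and the upper bound is Simmons' tetrahedral partition of $S^2(r)$ into four congruent spherical triangles, which the paper simply cites without details. Your reconstruction of the upper bound---the diameter computation identifying the vertex-to-opposite-midpoint pair as extremal and yielding the threshold $r=\tfrac{\sqrt{3-\sqrt3}}{2}$, together with the endpoint bookkeeping on the skeleton---is correct and supplies precisely what the paper leaves to the reference.
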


\paragraph{Structure of the paper.} Section 2 contains the proof of Theorem 2. 
In Section 3 we summarize the results and discuss some further questions.

\section{Proof of Theorem 2}

Recall that for $r \geq \frac{\sqrt{3}}{3}$ the statement was proved in~\cite{simmons1976chromatic}. 

Here is the sketch of the proof. Fix $r\in \left(\frac{1}{2} , \frac{\sqrt{3}}{3}\right)$. 
The proof consists of two steps. Suppose that there is a proper 3-coloring of the sphere $S^2(r)$.
In the first step we use the Borsuk--Ulam theorem to show that every color is dense in the sphere.
Consider a graph $G_k$ with vertices $x_1,\dots x_{2k+1}$, $y_1,\dots, y_{2k+1}$ and edges $\{(y_i,y_{i+1}), (x_i,y_i):1 \leq i \leq 2k+1\}$ 
(where indices are modulo $2k+1$). 
We provide an explicit representation of $G_k$ as a unit distance subgraph of the sphere. 
The second step is to show that this embedding is stable under small perturbations of $x_i$.
Then one can move every $x_i$ at a red point, which forces the odd cycle on vertices $y_i$ to be colored in the remaining two colors. 
The contradiction proves the theorem.

Note that the idea of attaching an odd cycle to a finite set $A$ in order to exclude the possibility of $A$ to be monochromatic was used in a series of papers devoted to the existence of planar unit distance graphs with chromatic number 4 and arbitrarily large girth~\cite{odonnell,Soifer, wormald}. 
The key twist in step 2 is to find the required embedding of $G_k$ implicitly, i.e. the corresponding $A$ is not a constructive set.
Similar ideas were used by the authors in~\cite{kanel18}.

\subsection{Step 1. Each color is a dense set}

All the distances are considered in the metrics induced from Euclidean space $\mathbb{R}^3$, the distance between $x$ and $y$ is denoted by $\|x-y\|$.

Fix $r\in \left(\frac{1}{2} , \frac{\sqrt{3}}{3}\right)$ and consider $S^2(r)$.
Suppose that there is a proper coloring of $S^2(r)$ in three colors. 
Consider the unit distance graph $G = G(S^2(r))$.
Then neighborhood of a vertex in $G$ forms a circle of diameter $d = \frac{\sqrt{4r^2-1}}{r}$ and radius $\rho = \sqrt{4r^2-1}$ in the induced metric, centered at the opposite point of the sphere.
Vice versa, any circle of such radius is a graph-neighborhood of some vertex, and hence contains points of at most two colors.
We need the following technical statement.
\begin{lemma}
Let $D \subset S^2(r) \times S^2(r)$ be a set of pairs $(x,y)$ such that $0 < \| x - y\| < d$. Then
\begin{itemize}
    \item for every $(x,y) \in D$ there are two circles of radius $\rho$ containing $x$ and $y$. One may denote their centers by $c_r$ and $c_l$ in such a way that the triple of radius-vectors $(x,y,c_r)$ is right-handed and the triple $(x,y,c_l)$ is left-handed.
    \item The functions $c_r(x,y)$ and $c_l(x,y)$ from $D$ to $S^2(r)$ are continuous.
\end{itemize}
\label{circ_continuous}
\end{lemma}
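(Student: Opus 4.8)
The plan is to turn the statement into elementary three‑dimensional linear algebra. Recall that a circle of radius $\rho$ on $S^2(r)$ is the set of points of $S^2(r)$ at Euclidean distance $\rho$ from some $c\in S^2(r)$, and since $\|x\|=\|c\|=r$ this set equals $S^2(r)\cap\{w:\langle w,c\rangle=h\}$ with $h:=r^2-\tfrac{\rho^2}{2}=\tfrac12-r^2>0$. Thus a circle of radius $\rho$ passes through both $x$ and $y$ if and only if its center $c$ satisfies $\langle x,c\rangle=\langle y,c\rangle=h$ and $\|c\|=r$. On $D$ one has $0<\|x-y\|<d<2r$ (the bound $d<2r$ holds for every admissible $r$), which forces $x\neq\pm y$, so $x,y$ are linearly independent and $L:=\{c\in\mathbb R^3:\langle x,c\rangle=\langle y,c\rangle=h\}$ is an affine line with direction $x\times y$. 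I would then exhibit the point $c_0:=\frac{h}{r^2+\langle x,y\rangle}(x+y)\in L$, using $\|x+y\|^2=2(r^2+\langle x,y\rangle)$ and $r^2+\langle x,y\rangle=2r^2-\tfrac12\|x-y\|^2>0$; since $c_0$ is a linear combination of $x$ and $y$ it is orthogonal to $x\times y$, hence $L=\{c_0+t(x\times y):t\in\mathbb R\}$.

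The second step is to intersect $L$ with $S^2(r)$. By orthogonality of $c_0$ and $x\times y$ we get $\|c_0+t(x\times y)\|^2=\|c_0\|^2+t^2\|x\times y\|^2$, so $L\cap S^2(r)$ is the root set of $t^2=(r^2-\|c_0\|^2)/\|x\times y\|^2$. A short computation gives $\|c_0\|^2=2h^2/(r^2+\langle x,y\rangle)$, and combining it with the identity $4h^2=(2r^2-1)^2=4r^4-4r^2+1$ one checks that $r^2-\|c_0\|^2>0$ is equivalent to $\|x-y\|^2<(4r^4-4h^2)/r^2=(4r^2-1)/r^2=d^2$, i.e. precisely to $(x,y)\in D$. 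Hence there are exactly two admissible centers $c_\pm=c_0\pm t_0(x\times y)$, $t_0:=\sqrt{r^2-\|c_0\|^2}/\|x\times y\|>0$, and they produce two distinct genuine circles of radius $\rho$ (genuine since $0<\rho<2r$; distinct since $h>0$, so a circle of radius $\rho$ determines its center uniquely). To fix the orientation I would note that on $L$ the scalar triple product equals $\det(x,y,c)=\langle x\times y,c\rangle=t\,\|x\times y\|^2$, whose sign is the sign of $t$; setting $c_r:=c_+$ and $c_l:=c_-$ then yields the asserted right-/left-handed labelling.

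For continuity it is convenient to absorb $\|x\times y\|$ into the radical and write
\[
c_r(x,y)=c_0+\sqrt{r^2-\|c_0\|^2}\,\frac{x\times y}{\|x\times y\|},\qquad c_l(x,y)=c_0-\sqrt{r^2-\|c_0\|^2}\,\frac{x\times y}{\|x\times y\|}.
\]
On $D$ every factor is continuous: $c_0$ because its denominator $r^2+\langle x,y\rangle=2r^2-\tfrac12\|x-y\|^2$ is bounded away from $0$ there (as $d^2<4r^2$ throughout the range of $r$); $\sqrt{r^2-\|c_0\|^2}$ because the radicand is $\geq 0$ on $D$; and $(x\times y)/\|x\times y\|$ because $x\times y\neq 0$ on $D$. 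This establishes both bullets of the lemma.

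The calculations are routine; the two points that deserve genuine care are the constant identity $4h^2=(2r^2-1)^2$ --- it is exactly this that forces the threshold in the count to come out to be $d^2$ rather than something else --- and the behaviour of $t_0$ near the diagonal: $t_0$ by itself is unbounded as $x\to y$ (while $x\times y\to 0$ at a matching rate), which is why I would keep the product $t_0(x\times y)$ together as the single continuous expression $\sqrt{r^2-\|c_0\|^2}\,(x\times y)/\|x\times y\|$. With that packaging continuity on $D$ is immediate, and there is no danger of the two centers being exchanged, since the orientation formula above labels them unambiguously at every point of $D$.
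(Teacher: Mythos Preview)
Your argument is correct. The paper itself states Lemma~\ref{circ_continuous} without proof (it is introduced merely as ``the following technical statement''), so there is nothing to compare against; your elementary linear-algebra derivation---reducing the problem to intersecting the affine line $L=\{c:\langle x,c\rangle=\langle y,c\rangle=h\}$ with $S^2(r)$, computing $c_0$ explicitly, and reading off handedness from the sign of $\det(x,y,c)=t\|x\times y\|^2$---is exactly the kind of routine verification the authors evidently had in mind and chose to omit.

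Two minor remarks. First, your parenthetical ``the bound $d<2r$ holds for every admissible $r$'' is fine in the paper's setting $r\in(1/2,\sqrt3/3)$, but note that equality $d=2r$ occurs at $r=1/\sqrt2$, so the phrasing could be sharpened. Second, your closing comment about $t_0$ blowing up near the diagonal, while true, is not really needed for the proof: since $D$ excludes the diagonal, both $t_0$ and $x\times y$ are separately continuous on $D$, and continuity of $c_r,c_l$ follows without any repackaging. The repackaged formula you give is of course also correct and perhaps cleaner to write down.
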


In what follows, we will call a circle passing through the points $x$, $y$ with center $c$ \emph{right-handed} if the triple $(x,y,c)$ is right-handed, and \emph{left-handed} otherwise.

Let $C_{red}$, $C_{blue}$, $C_{green}$ be the sets of red, blue and green points, respectively. 
A \textit{chromaticity} of a point $x$ is the number of sets $\overline{C_{red}}$, $\overline{C_{blue}}$, $\overline{C_{green}}$ containing $x$ (as usual, $\overline{T}$ stands for the closure of a set $T$).
A set $T \subset S^2(r)$ is called \textit{dense} if $\overline{T} = S^2(r)$. 
Let $B_\rho(x)$ denote the set of points $y \in S^2(r)$ such that $\|x - y\| < \rho$, i.e. an open ball of radius $\rho$ and diameter $d$.

\begin{lemma}
If some open ball of diameter $d$ contains points of all three colors then each of $C_{red}$, $C_{blue}$, $C_{green}$ is dense in the sphere.
\label{dense}
\end{lemma}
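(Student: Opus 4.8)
Throughout, write $N(z)=\{y\in S^2(r):\|y-z\|=1\}$ for the graph--neighbourhood of a vertex $z$. It suffices to prove that each of $\overline{C_{red}},\overline{C_{blue}},\overline{C_{green}}$ equals $S^2(r)$. Two facts will be used repeatedly. First, the \emph{dictionary}: for $z\in S^2(r)$ the circle $\{y:\|y-z\|=\rho\}$ coincides with $N(-z)$, so it carries at most two colours and in fact omits the colour of $-z$; and $B_\rho(x)$ is precisely the open spherical cap bounded by that circle. Since $r<\tfrac{\sqrt3}{3}$ this cap is strictly smaller than a hemisphere, so any two of its points lie at Euclidean distance $<d$. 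Second, the \emph{circle trick}: if $u\neq v$ have different colours and $\|u-v\|<d$, then $N(u)$ and $N(v)$ are circles of radius $\rho$ whose centres $-u,-v$ lie at distance $<d$, hence they meet in exactly two points; each such point lies on a neighbourhood circle of $u$ and of $v$, so it avoids the colours of $u$ and of $v$ and therefore carries the third colour.

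The first part of the plan is to show that \emph{each} colour is dense on \emph{some} nonempty open set. Fix $x_0$ with $B_\rho(x_0)$ trichromatic. By Baire's theorem the three sets $C_{red}\cap B_\rho(x_0)$, $C_{blue}\cap B_\rho(x_0)$, $C_{green}\cap B_\rho(x_0)$ cannot all be nowhere dense, so some colour --- say blue --- is dense on a nonempty open $O\subseteq B_\rho(x_0)$. In particular blue is now available as a genuinely two--parameter supply of points. Feeding blue points from $O$ together with a point of another colour from $B_\rho(x_0)$ into the circle trick produces the remaining colours on nonempty open sets as well (this is exactly the place where trichromaticity of an \emph{open ball}, rather than of a circle, is used, and where one must check that the families of points supplied to the circle trick are honestly two--dimensional so that their images under the intersection map contain open sets). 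Iterating, we obtain that $\mathcal D_{c}:=\operatorname{int}\overline{C_{c}}$ is nonempty for each colour $c$.

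The second part is to propagate: I would show $\mathcal D_{c}=S^2(r)$ for every $c$. Since $S^2(r)$ is connected and $\mathcal D_{c}$ is open and nonempty, it is enough to prove $\mathcal D_c$ closed. Suppose $x\in\partial\mathcal D_c$. Then every small ball about $x$ contains, on one hand, a nonempty open subset of $\mathcal D_c$ (on which $c$ is dense) and, on the other hand, a nonempty open subset of $S^2(r)\setminus\overline{C_c}$ (on which $c$ does not occur at all; such a subset exists because $x\notin\operatorname{int}\overline{C_c}$). One then derives a contradiction from the coexistence, inside an arbitrarily small ball, of a densely--$c$ patch and a $c$--free patch: a point $q$ of the $c$--free patch has a bichromatic neighbourhood circle $N(q)$ omitting the colour of $q$, while $c$--coloured points of the densely--$c$ patch, being within distance $<d$ of $q$, force further colours onto $N(q)$ through the circle trick; tracking these constraints as $q$ ranges over the $c$--free patch and comparing with the two colours actually present there yields the contradiction. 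Hence $\overline{C_c}=S^2(r)$ for each $c$.

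The genuinely hard part is the propagation step of the previous paragraph, together with the fattening step that precedes it: all of the real content lies in extracting global information from the single structural fact that every circle of radius $\rho$ is bichromatic. Two points demand care. First, open versus closed balls: in any limiting argument the colours guaranteed by trichromaticity of $B_\rho(x_n)$ may escape to the bounding circle $\{y:\|y-x\|=\rho\}$ as $x_n\to x$, so one cannot naively pass limits of the predicate ``$B_\rho(\cdot)$ is trichromatic''. Second, a colour class need not have interior, so the circle trick is useless unless it is applied to two--parameter families of points --- which is precisely why Baire's theorem is invoked first, to replace a colour that is merely present by one that is dense on an open set before any circle--intersection argument is run.
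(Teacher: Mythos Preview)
What you have written is a plan, not a proof; both of the two steps you yourself flag as ``the genuinely hard part'' are left undone.

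\medskip

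\textbf{The fattening step.} From Baire you obtain one colour, say blue, dense on an open $O\subset B_\rho(x_0)$. You then assert that feeding blue points from $O$ together with ``a point of another colour'' into the circle trick produces the remaining colours on open sets. But with a \emph{single} red point $r_0$, the intersection $N(b)\cap N(r_0)$ lies on the fixed circle $N(r_0)$ for every blue $b$, so all the green points you manufacture are trapped on a one--dimensional arc, not an open set. To get a genuinely two--parameter output you would need a second colour varying over an open set as well --- which is exactly what you do not yet have. Your word ``iterating'' hides this: the arcs of green you obtain are at Euclidean distance~$1$ from $r_0\in B_\rho(x_0)$, hence (since $d<1$) far outside $B_\rho(x_0)$, so they are not within distance $d$ of your blue supply $O$, and the circle trick cannot be reapplied to them and to $O$.

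\medskip

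\textbf{The propagation step.} Here nothing is proved at all. You have, inside an arbitrarily small ball, an open patch where $c$ is dense and an open patch where $c$ is absent; you then write that ``tracking these constraints\dots yields the contradiction''. But it is not clear which constraints, or why they conflict. Taking $q$ in the $c$--free patch and a $c$--coloured $g$ nearby, the circle trick gives you a point of the third colour on $N(q)\cap N(g)$ --- at distance~$1$ from $q$, hence far away from the small ball in which your two patches live. No information comes back to the $c$--free patch, and no contradiction appears.

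\medskip

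\textbf{How the paper proceeds instead.} The paper abandons $\operatorname{int}\overline{C_c}$ and Baire entirely and works with the \emph{chromaticity} of a point (the number of colour closures it lies in). From the trichromatic ball it first pins two colours on a single circle of radius $\rho$, locates on that circle a point $u$ in the closure of two colours, and then uses two intersecting circles through $u$ to upgrade $u$ to chromaticity~$3$. The key propagation lemma is local and concrete: if $p$ has chromaticity~$3$ and $q$ has chromaticity $\ge 2$ with $\|p-q\|<d$, then $q$ has chromaticity~$3$; this is proved by the same two--circle intersection argument, perturbing one endpoint inside each colour closure near $p$ and $q$. Chromaticity~$3$ then spreads along every circle of radius~$\rho$ through two nearby chromaticity--$3$ points, and hence over the whole sphere by chaining such circles. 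The point is that passing to closures (chromaticity) rather than interiors ($\mathcal D_c$) makes the perturbation arguments work without any appeal to category or to two--parameter families.
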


\begin{proof}
Consider points $x \in C_{red}$, $y \in C_{blue}$ and $z \in C_{green}$ inside a ball $K_0$ of diameter $d$. 
Then one can continuously move $K_0$ to a ball $K$ containing two points (say, $x$ and $y$) on the boundary; 
at the first such moment the point $z$ lies inside $K$. 
The circle $\partial K$ contains blue and red points and so it is colored in blue and red only. Hence, it contains a point $u$ lying in the closures of $C_{red}$ and $C_{blue}$; without loss of generality, assume that point $u$ is red. 
A red-green circle (right-handed, see Lemma~\ref{circ_continuous}) of diameter $d$ containing $z$ and $u$  and a blue-green circle (left-handed) with the diameter $d$ containing $z$ and blue point $u'$ in a small neighborhood of $u$ intersect in a green point $v$.
Note that if $u=u'$ then $v=u=u'$. Hence, due to the continuity of circles in Lemma~\ref{circ_continuous}, $v$ may be arbitrarily close to $u$ with a proper choice of $u'$ (see Fig.~\ref{chrom3}). It implies that the chromaticity of $u$ is three.

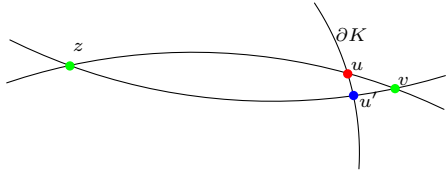
\begin{figure}[H]
    \centering
    \begin{tikzpicture}[line cap=round,line join=round,>=triangle 45,x=1.0cm,y=1.0cm, scale=0.70]
\draw [shift={(10.08,4.04)}] plot[domain=4.25:5.01,variable=\t]({1*11.07*cos(\t r)+0*11.07*sin(\t r)},{0*11.07*cos(\t r)+1*11.07*sin(\t r)});
\draw [shift={(6.54,-8)}] plot[domain=-0.06:0.58,variable=\t]({1*5.17*cos(\t r)+0*5.17*sin(\t r)},{0*5.17*cos(\t r)+1*5.17*sin(\t r)});
\draw [shift={(8.58,-16.93)}] plot[domain=1.12:1.9,variable=\t]({1*10.83*cos(\t r)+0*10.83*sin(\t r)},{0*10.83*cos(\t r)+1*10.83*sin(\t r)});
\begin{scriptsize}
\fill [color=green] (6.27,-6.36) circle (2.5pt);
\draw[color=black] (6.43,-6.0) node {$z$};
\fill [color=green] (12.38,-6.79) circle (2.5pt);
\draw[color=black] (12.54,-6.65) node {$v$};
\draw[color=black] (11.56,-5.76) node {$\partial K$};
\fill [color=red] (11.49,-6.51) circle (2.5pt);
\draw[color=black] (11.67,-6.36) node {$u$};
\fill [color=blue] (11.6,-6.92) circle (2.5pt);
\draw[color=black] (11.89,-7.0) node {$u'$};
\end{scriptsize}
\end{tikzpicture}
    \caption{Finding a point with chromaticity 3 in Lemma~\ref{dense}}
    \label{chrom3}
\end{figure}

Since $u$ has chromaticity 3, a small neighborhood of $u$ contains a point $a \neq u$ with the chromaticity at least 2.
Suppose that $a$ has chromaticity 2 (say, $a$ does not lie in $\overline{C_{green}}$) and $\|a-u\|<d$. 
Consider a green point $b$ in a small neighborhood of $u$. Consider a red point $e$ and a blue point $f$ in a small neighborhood of $a$.
Then the right-handed circle containing $b$ and $e$ is red-green and the left-handed circle containing $b$ and $f$ is blue-green, so they intersect in a green point $g$. Since the neighborhoods can be chosen arbitrarily small, $g$ can be arbitrarily close to $a$. Hence $a$ has chromaticity 3, a contradiction. 

Thus we have shown that if a point with the chromaticity 3 and a point with the chromaticity at least 2 lie at a distance smaller than $d$, then they both have chromaticity 3.

\begin{figure}[H]
    \centering
    \begin{tikzpicture}[line cap=round,line join=round,>=triangle 45,x=1cm,y=1cm, scale=0.6]
\draw [line width=0.4pt] (-3.5345454545454498,-1.458181818181819) circle (5cm);
\draw [line width=0.4pt,dotted] (-5.740650126700984,-5.945174371718489) circle (1cm);
\draw [line width=0.4pt,dotted] (-1.567168276054479,-6.054857468861656) circle (1cm);
\draw [line width=0.4pt,dotted] (-1.7432879583165823,3.209943778050987) circle (1cm);
\draw [line width=0.4pt,dash pattern=on 1pt off 1pt] (-3.8499490326019727,-1.8997614922280406) circle (5.004384588957655cm);
\begin{scriptsize}
\draw [fill=black] (-3.5345454545454498,-1.458181818181819) circle (2.5pt);
\draw[color=black] (-3.172727272727266,-0.985) node {$c$};
\draw[color=black] (2,-0.985) node {$L$};
\draw [fill=red] (-5.740650126700984,-5.945174371718489) circle (2.5pt);
\draw[color=red] (-5.328727272727267,-5.407) node {$x_1$};
\draw [fill=blue] (-1.567168276054479,-6.054857468861656) circle (2.5pt);
\draw[color=blue] (-1.5447272727272658,-5.429) node {$x_2$};
\draw [fill=red] (-1.7432879583165823,3.209943778050987) circle (2.5pt);
\draw[color=red] (-1.390727272727266,3.679) node {$y_1$};
\draw [fill=blue] (-6.243636363636359,-6.294545454545455) circle (2.5pt);
\draw[color=blue] (-6.472727272727267,-6.661) node {$u_1$};
\draw [fill=green] (-5.970909090909086,-5.44) circle (2.5pt);
\draw [fill=green] (-1.8254545454545408,-6.4763636363636365) circle (2.5pt);
\draw[color=green] (-1.8307272727272659,-6.881) node {$v_1$};
\draw [fill=red] (-1.1487272727272657,-6.518) circle (2.5pt);
\draw [fill=blue] (-2.1709090909090865,2.8145454545454536) circle (2.5pt);
\end{scriptsize}
\end{tikzpicture}
    \caption{Propagation of 3-chromaticity along a circle in Lemma~\ref{dense}}
    \label{chrom_circ}
\end{figure}
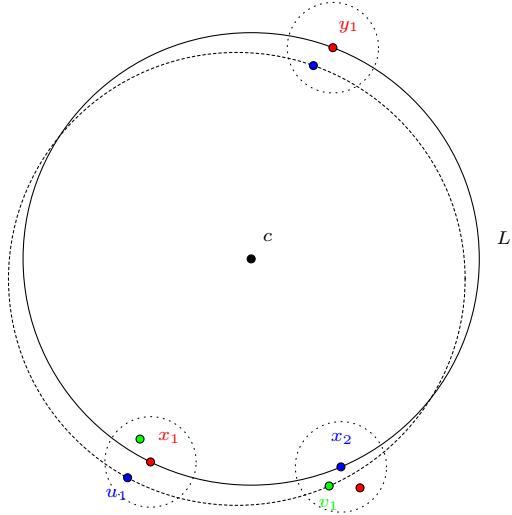

Now let $x_1$ and $x_2$ be points of chromaticity 3 such that $\|x_1-x_2\| < d$. 
We claim that any point on a circle $L$ of diameter $d$ containing $x_1$ and $x_2$ has chromaticity three. 
By the previous argument it is enough to show that the chromaticity is at least 2. 
Without loss of generality, a triple $(x_1,x_2,c)$ is left-handed, where $c$ is the center of $L$ on the sphere.
Arguing indirectly, assume that a point $y_1 \in L$ has a small red neighborhood $U_{y_1}$. Choose a blue point $u_1$ in a small neighborhood of $x_1$ and a green point $v_1$ in a small neighborhood of $x_2$ (see Fig. \ref{chrom_circ}). 
By Lemma~\ref{circ_continuous} the left-handed circle of diameter $d$ passing through blue point $u_1$, green point $v_1$ is close to $L$ so it intersects red set $U_{y_1}$; this contradiction shows that every point on $L$ has chromaticity 3.

Let $q$ be an arbitrary point of $S^2(r)$. Consider a path $q_0, q_1 \dots q_t = q$ such that $q_0 \in L$ and $\|q_{i+1} - q_i \| < \rho$ for $0 \leq i \leq t-1$.
A circle $L_1$ of diameter $d$ that passes through $q_{1}$ and $q_{0}$ intersects $L$ in two points, so by the previous argument every point (in particular, $q_1$) of $L_1$ has chromaticity 3. By induction, a circle $L_{i+1}$ of diameter $d$ that passes through $q_{i+1}$ and $q_{i}$ intersects $L_{i}$ in two points, so every point in $L_{i+1}$ (in particular $q_{i+1}$) has chromaticity 3. So $q = q_t$ also has chromaticity 3. Since $q \in S^2(r)$ was arbitrary, every point of  $S^2(r)$ has chromaticity 3.


\end{proof}


Suppose that the condition of Lemma~\ref{dense} does not hold, i.e. 
\begin{equation}\tag{$\star$}
\mbox{\emph{every open ball of diameter $d$ contains points of at most two colors}}.
\end{equation}
Consider a continuous function  
\[
f: S^2(r) \to \mathbb{R}^2, \quad f(x) = (\dist (x,\overline{C_{red}}), \dist (x,\overline{C_{blue}})),
\]
where $\dist(\cdot)$ stands for the distance between a point and a set in $\mathbb{R}^3$.
By the Borsuk--Ulam theorem there exists $x^* \in S^2(r)$ such that $f(x^*)=f(-x^*)$. We have to deal with three cases.

\begin{figure}[H]
    \centering
\begin{tikzpicture}[line cap=round,line join=round,>=triangle 45,x=1.0cm,y=1.0cm, scale=0.6]
\clip(-3,-8) rectangle (13,4);
\draw(4.92,-1.94) circle (7.4cm);
\draw [rotate around={-0.33:(4.94,-1.94)},dotted] (4.94,-1.94) ellipse (7.39cm and 1.46cm);
\draw [rotate around={-59.24:(10.3,1.19)},dash pattern=on 2pt off 2pt] (10.3,1.19) ellipse (2.56cm and 0.92cm);
\draw [rotate around={-60.56:(-0.47,-5.05)},dash pattern=on 2pt off 2pt] (-0.47,-5.05) ellipse (2.56cm and 0.92cm);
\draw [shift={(13.03,4.09)},dash pattern=on 2pt off 2pt]  plot[domain=3.64:4.1,variable=\t]({1*4.82*cos(\t r)+0*4.82*sin(\t r)},{0*4.82*cos(\t r)+1*4.82*sin(\t r)});
\draw [dotted] (-0.78,-5.24)-- (-1.74,-2.83);
\draw [dotted] (-1.74,-2.83)-- (0.81,-7.26);
\begin{scriptsize}
\fill [color=black] (4.92,-1.94) circle (1.5pt);
\fill [color=black] (10.55,1.32) circle (1.5pt);
\draw[color=black] (10.9,1.22) node {$-x^*$};
\fill [color=blue] (-0.78,-5.24) circle (1.5pt);
\draw[color=blue] (-1.08,-5.14) node {$x^*$};
\fill [color=red] (-0.36,-5.58) circle (1.5pt);
\draw[color=red] (-0.38,-5.8) node {$z$};
\fill [color=black] (10.2,1.7) circle (1.5pt);
\draw[color=black] (10.1,2.04) node {$-z$};
\fill [color=green] (9.39,0.93) circle (1.5pt);
\draw[color=green] (9.06,0.6) node {$y_1$};
\draw[color=black] (-1.56,-4.18) node {$\rho$};
\draw[color=black] (-0.02,-5.32) node {$d$};
\fill [color=blue] (10.4,0.92) circle (1.5pt);
\fill [color=red] (10.12,1.22) circle (1.5pt);
\draw [dash pattern=on 5pt off 5pt] (-0.78,-5.24)-- (10.87,-0.85);
\draw[color=black] (5.08,-1.68) node {$0$};
\draw[color=black] (6.66,-2.68) node {$1$};
\draw[color=black] (0.72,2.0) node {$S^2(r)$};
\draw[color=black] (1.78,-6.86) node {$B_\rho(x^*)$};
\draw[color=black] (7.8,2.76) node {$B_\rho(-x^*)$};
\end{scriptsize}
\end{tikzpicture}
    \caption{Case 1}
    \label{case1}
\end{figure}

\paragraph{Case 1: $f(x^*) = (0,0)$.} Without loss of generality, the point $x^*$ is blue. 
One may pick a red point $z$, which is arbitrarily close to $x^*$. 
If $\|x-z\| < \rho$, then the intersection of circles of unit Euclidean radius with centers $x^*$ and $z$ consists of two green points $y_1,y_2$ belonging to the circle of radius $\rho$ centered at $-x^*$. Hence, one can cover a small neighborhood of $-x^*$ and $y_1$ by a ball of diameter $d$.
Every neighborhood of $-x^*$ contains red and blue points; point $y_1$ is green (see Fig. \ref{case1}).
We have a contradiction with assumption ($\star$).

\paragraph{Case 2: $f(x^*) = (a,b)$, $a,b > 0$.} Then both points $x^*, - x^*$ are green. 
We may swap blue and green colors to reduce the situation to the next case with the same $x^*$.

\paragraph{Case 3: $f(x^*) = (a,0)$, $a > 0$.} 
We claim that $a > \rho$.  Assume the contrary, i.e.  $x^* \in \overline{C_{blue}}$ and for every $\eta > 0$ there is a red point $z = z_\eta$ such that $\|x^*-z\| \leq \rho + \eta$. Note that if $x^*$ is green, then it contradicts ($\star$), so $x^*$ is blue. 
There are distinct points $y_1,y_2 \in \overline{B_\rho(-x^*)}$ such that $\|x^*-y_1\| = \|x^*-y_2\| = \|z - y_1\| = \|z - y_2\| = 1$.
Since $x^*$ is blue and $z$ is red $y_1,y_2 \in \overline{C_{green}}$. 
Recall that $f(-x^*) = f(x^*)$, so there is a point $z' \in \overline{C_{red}} \cap \overline{B_{\rho}(-x^*)}$. 
Let $y' \in \{y_1,y_2\}$ be such that $z'$, $-x^*$ and $y'$ do not lie on a great circle of $S^2(r)$. 
Then for a small enough $\eta$ the neighborhoods of $-x^*$, $y'$ and $z'$ can be covered by a ball of diameter $d$.
This is a contradiction with ($\star$).

So the set $\overline{B_\rho(x^*)} \cup \overline{B_\rho(-x^*)}$ is colored with blue and green.

\begin{lemma}
The bipartite subgraph of $S^2(r)$ with parts $\overline{B_\rho(x^*)}$ and $\overline{B_\rho(-x^*)}$ is connected. 
\label{connected}
\end{lemma}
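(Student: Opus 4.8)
The plan is to prove that the bipartite graph $\Gamma$ under consideration — with parts $D_1:=\overline{B_\rho(x^*)}$ and $D_2:=\overline{B_\rho(-x^*)}$, and an edge joining $u\in D_1$ to $w\in D_2$ whenever $\|u-w\|=1$ — has bounded diameter, by showing that every vertex of $D_1$ lies within distance $2$ of $x^*$, every vertex of $D_2$ within distance $2$ of $-x^*$, and $x^*$ within distance $3$ of $-x^*$. First I would record the relevant spherical geometry. Set $\theta_0:=2\arccos\frac{1}{2r}$; from $\frac12<r<\frac{\sqrt3}{3}$ one gets $\theta_0\in\bigl(0,\frac{\pi}{3}\bigr)$, the set $\overline{B_\rho(p)}$ is exactly the closed spherical cap of angular radius $\theta_0$ centred at $p$ (its Euclidean chord-radius being $\rho$), and $\|p-q\|=1$ holds for $p,q\in S^2(r)$ if and only if the angular distance between $p$ and $q$ equals $\pi-\theta_0$, equivalently the angular distance between $p$ and $-q$ equals $\theta_0$. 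In particular the $G(S^2(r))$-neighbourhood of $p$ is precisely the boundary circle of $\overline{B_\rho(-p)}$, and since $2\theta_0<\pi-\theta_0$ there are no unit-distance edges inside $D_1$ or inside $D_2$; thus $\Gamma$ is simply the subgraph of $G(S^2(r))$ induced on $D_1\cup D_2$.

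The geometric core is the following two-circle lemma: if $A,B\in S^2(r)$ are at angular distance $\alpha\le 2\theta_0$, then some $P\in S^2(r)$ is at angular distance exactly $\theta_0$ from both $A$ and $B$. To see this (assuming $A\neq B$, the other case being trivial), let $M$ be the midpoint of the shorter arc $AB$ and let $\gamma$ be the great circle through $M$ perpendicular to that arc; by the spherical Pythagorean theorem a point of $\gamma$ at arc-distance $t\in[0,\frac{\pi}{2}]$ from $M$ lies at angular distance $\arccos\!\bigl(\cos\frac{\alpha}{2}\cos t\bigr)$ from $A$ (and the same from $B$), a quantity that increases continuously from $\frac{\alpha}{2}$ to $\frac{\pi}{2}$; since $\frac{\alpha}{2}\le\theta_0\le\frac{\pi}{3}<\frac{\pi}{2}$, the intermediate value theorem yields the required $P$.

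Now the connectivity argument runs as follows. Given $u\in D_1$, its angular distance to $x^*$ is at most $\theta_0\le 2\theta_0$, so the lemma provides $q\in S^2(r)$ at angular distance $\theta_0$ from both $u$ and $x^*$; then $q\in\overline{B_\rho(x^*)}=D_1$, hence $-q\in D_2$, while $u$ and $x^*$ are each at angular distance $\pi-\theta_0$ from $-q$, giving the edges $u\sim -q\sim x^*$ in $\Gamma$. Thus every vertex of $D_1$ is within $\Gamma$-distance $2$ of $x^*$, and, applying the antipodal map $p\mapsto -p$ (a graph automorphism of $\Gamma$ interchanging $D_1$ and $D_2$), every vertex of $D_2$ is within distance $2$ of $-x^*$. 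Finally, any point $w_0$ on the boundary circle of $D_2$ satisfies $\|x^*-w_0\|=1$, so $x^*\sim w_0$ in $\Gamma$, and $w_0\in D_2$ is within distance $2$ of $-x^*$ by the previous step; hence $\dist_\Gamma(x^*,-x^*)\le 3$. Combining, any two vertices of $\Gamma$ lie at distance at most $2+3+2=7$, so $\Gamma$ is connected.

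The only delicate step is the two-circle lemma, together with consistently tracking the dictionary ``Euclidean distance $1$ $\leftrightarrow$ angular distance $\pi-\theta_0$ $\leftrightarrow$ angular distance $\theta_0$ from the antipode'' — everything else is bookkeeping. A tidier packaging, which I might adopt instead, is to transport $D_2$ onto $D_1$ by $w\mapsto -w$: this identifies $\Gamma$ with the graph on the single cap $\overline{B_\rho(x^*)}$ in which two points are adjacent exactly when their angular distance equals $\theta_0$, and the two-circle lemma shows every point of that cap is within distance $2$ of its centre, so connectivity is immediate.
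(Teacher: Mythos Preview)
Your proof is correct and follows essentially the same approach as the paper: for every $u\in D_1$ you exhibit a common $\Gamma$-neighbour of $u$ and $x^*$ in $D_2$, which is exactly what the paper does in one line (``any point $x\in\overline{B_\rho(x^*)}$ has a common neighbour with $x^*$ since the corresponding unit circles intersect''), and then you conclude by joining the two parts via an edge. The only difference is that you spell out the spherical trigonometry (the angular-distance dictionary and the intermediate-value argument for the two-circle lemma) that the paper leaves implicit.
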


\begin{proof}
Any point $x \in \overline{B_\rho(x^*)}$ has a common neighbor with $x^*$ since the corresponding unit circles intersect.
So $\overline{B_\rho(x^*)}$ belong to the same connected component; the same holds for $\overline{B_\rho(-x^*)}$. 
There is an edge between $\overline{B_\rho(x^*)}$ and $\overline{B_\rho(-x^*)}$, and so the subgraph is connected.
\end{proof}

By Lemma~\ref{connected}, one can color $\overline{B_\rho(x^*)} \cup \overline{B_\rho(-x^*)}$ in two colors in the unique way (up to symmetry): the first part is blue and the second one is green.
Then the distance from $x^*$ and $-x^*$ to $\overline{C_{blue}}$ is zero and nonzero simultaneously.

This contradiction implies that each color is dense in the sphere.

\subsection{Step 2. Stability of embedding}

In this section we will need the implicit function theorem \cite{implicitfunc} in the following weakened formulation.
\begin{theorem}
Let $F:\; \mathbb{R}^{2s} \to \mathbb{R}^s$ be a continuously differentiable function, 
\[
F=F(X,Y) = F(x_1, \dots, x_s; \;y_1, \dots , y_s),
\]
and at some point $X = a$, $Y = b$  the following conditions are satisfied
\[
F(a, b) = 0, \quad \quad \quad
\det \left( \frac{\partial F(X,Y)}{\partial Y } \right)_{X = a, Y = b} \neq 0.
\]
Then there exists $\eta>0$ such that the system of equations $F(X, Y) = 0$ is solvable in $Y$ for any $X$ satisfying the condition $\|X-a\|< \eta$. 
\label{implfunc}
\end{theorem}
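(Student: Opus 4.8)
The plan is to deduce this weakened implicit function theorem from the Banach contraction principle rather than to invoke the full implicit function theorem. Write $M = \left(\partial F(X,Y)/\partial Y\right)_{X=a,\,Y=b}$, which is an invertible $s\times s$ matrix by hypothesis, and introduce the auxiliary map $\Phi(X,Y) = Y - M^{-1}F(X,Y)$. The point of this substitution is that for a fixed $X$ one has $F(X,Y)=0$ if and only if $Y$ is a fixed point of $\Phi(X,\cdot)$, so it suffices to produce such a fixed point for every $X$ near $a$.

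First I would control the $Y$-derivative of $\Phi$. Since $\partial\Phi/\partial Y = I - M^{-1}\,\partial F/\partial Y$ and this vanishes at $(a,b)$, continuity of the first derivatives of $F$ (which is $C^1$) yields a $\delta>0$ such that $\|\partial\Phi/\partial Y(X,Y)\|\leq \tfrac12$ on the box $\{\|X-a\|\leq\delta,\ \|Y-b\|\leq\delta\}$. By the mean value inequality, for each fixed $X$ with $\|X-a\|\leq\delta$ the map $\Phi(X,\cdot)$ is a $\tfrac12$-contraction on the closed ball $\overline{B}(b,\delta)$.

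Next I would arrange that $\Phi(X,\cdot)$ maps $\overline{B}(b,\delta)$ into itself. Here the key observation is that $\Phi(a,b) = b - M^{-1}F(a,b) = b$; by continuity of $\Phi$ in $X$ one can pick $\eta\in(0,\delta]$ so that $\|\Phi(X,b)-b\|\leq\delta/2$ whenever $\|X-a\|<\eta$. Then for any $Y\in\overline{B}(b,\delta)$ the triangle inequality gives $\|\Phi(X,Y)-b\| \leq \|\Phi(X,Y)-\Phi(X,b)\| + \|\Phi(X,b)-b\| \leq \tfrac12\|Y-b\| + \delta/2 \leq \delta$, so indeed $\Phi(X,\cdot)$ is a self-map of the complete metric space $\overline{B}(b,\delta)$. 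Applying the Banach fixed point theorem for each such $X$ produces a (unique) $Y=Y(X)$ with $\Phi(X,Y)=Y$, i.e. $F(X,Y)=0$, which is exactly the asserted solvability with this $\eta$.

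The only delicate point is the self-map property: one must spend the contraction factor $\tfrac12<1$ wisely, keeping half of the radius in reserve to absorb the perturbation caused by moving $X$ away from $a$. Everything else is a routine use of the continuity of $F$ and of its first derivatives. If one prefers to avoid fixed-point iteration entirely, the same two estimates together with Brouwer's theorem — $\Phi(X,\cdot)$ being a continuous self-map of a closed ball — give existence just as well, at the cost of losing uniqueness, which is not needed here.
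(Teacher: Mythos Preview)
Your proof is correct and is the standard contraction-mapping derivation of the implicit function theorem. However, the paper does not actually prove this statement: it is quoted as a classical result (with a citation to \cite{implicitfunc}) and used as a black box in the proof of Lemma~4. So there is nothing to compare against --- you have supplied a self-contained argument where the paper simply invokes the literature.
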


Recall that $G_{k}$ is an odd cycle of length $m = 2k+1$ with an extra pendant (leaf) vertex attached to each vertex of the cycle.
In particular, $G_k$ has $2m$ vertices and $2m$ edges. 

Denote by $y_1,\dots, y_m$ the points of $S^2(r)$ that correspond to the cycle vertices and by $x_1,\dots, x_m$ the points of $S^2(r)$ that correspond to the pendant vertices. For convenience, let us put $X = (x_1,\dots, x_m)$ and $Y = (y_1,\dots, y_m)$ the vectors of dimension $s = 3m$ containing all coordinates. Then the embedding of $G_k$ can be given by the pair $(X,Y)$.

\begin{lemma}
Fix the radius $r \in \left(\frac{1}{2} , \frac{\sqrt{3}}{3}\right)$. Then if $k$ is large enough, there exists a unit distance embedding $(X,Y)$ of $G_k$  into $S^2(r)$ and a constant $\eta>0$ such that for any $\tilde X$ satisfying $\|\tilde X-X\|<\eta$ there exists $Y$ such that $(\tilde X, \tilde Y)$ is a ``perturbed'' unit distance embedding of $G_k$.
\end{lemma}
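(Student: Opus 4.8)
Here is a plan for proving the stability lemma.

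The plan is to realize the embedding as a \emph{nondegenerate} zero of an explicit map and then invoke Theorem~\ref{implfunc}. Put $m=2k+1$ and let $F\colon\mathbb{R}^{3m}\times\mathbb{R}^{3m}\to\mathbb{R}^{3m}$ be the polynomial map whose $3m$ scalar components are, for $i=1,\dots,m$ with indices taken modulo $m$,
\[
\|y_i\|^2-r^2,\qquad \|x_i-y_i\|^2-1,\qquad \|y_i-y_{i+1}\|^2-1 .
\]
Then $F(X,Y)=0$ says precisely that every $y_i$ lies on $S^2(r)$ and that $(X,Y)$ realizes every edge of $G_k$ at unit length, and $s=3m$ matches the notation of Theorem~\ref{implfunc}. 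Since $F$ is a polynomial it is $C^1$, and $\partial F/\partial Y$ is a square $3m\times 3m$ matrix. Hence it suffices to exhibit one point $(X,Y)=(a,b)$ with $F(a,b)=0$ and $\det\bigl(\partial F/\partial Y\bigr)\big|_{(a,b)}\neq 0$: Theorem~\ref{implfunc} then supplies $\eta>0$ so that $F(\tilde X,\cdot)=0$ is solvable whenever $\|\tilde X-a\|<\eta$, and the first block of equations pins the components of any solution $\tilde Y$ back onto $S^2(r)$, so $(\tilde X,\tilde Y)$ is the desired perturbed embedding (with $a$ playing the role of $X$ in the statement).

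For the base point $(a,b)$ I would place the cycle vertices equally spaced on a horizontal circle,
\[
y_j=\Bigl(\sqrt{r^2-h^2}\,\cos(j\theta),\ \sqrt{r^2-h^2}\,\sin(j\theta),\ h\Bigr),\qquad \theta=\frac{2\pi\ell}{m},\quad \gcd(\ell,m)=1,
\]
so that $y_1y_2\cdots y_m$ is a genuine $m$-cycle on $S^2(r)$; the consecutive chords have length $2\sqrt{r^2-h^2}\,\sin\!\bigl(\tfrac{\pi\ell}{m}\bigr)$, which equals $1$ exactly when $\sqrt{r^2-h^2}=\tfrac12\csc\!\bigl(\tfrac{\pi\ell}{m}\bigr)$. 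Choosing for instance $\ell=(m-1)/2$, the right-hand side is $\tfrac12\sec\!\bigl(\tfrac{\pi}{2m}\bigr)$, which tends to $\tfrac12<r$ as $k\to\infty$; so for all sufficiently large $k$ there is an admissible $h\in(0,r)$ — this is the only place the hypothesis ``$k$ large enough'' is used. For the pendant point $x_i$ I would not take an arbitrary point of $S^2(r)$ at distance $1$ from $y_i$, but the specific one lying in the plane $\operatorname{span}(y_i,y_{i+1})$: the set of points of $S^2(r)$ at unit distance from $y_i$ is a circle whose centre is a scalar multiple of $y_i$, so it lies in that plane and meets it in two points; take one of them for $x_i$. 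It is this choice that makes the Jacobian nonsingular.

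To verify nondegeneracy, suppose $v=(v_1,\dots,v_m)$, $v_i\in\mathbb{R}^3$, lies in the kernel of $\partial F/\partial Y$ at $(a,b)$. The rows coming from $\|y_i\|^2-r^2$ and $\|x_i-y_i\|^2-1$ give $v_i\perp y_i$ and $v_i\perp(y_i-x_i)$, hence $v_i\perp\operatorname{span}(y_i,x_i)$; since $x_i\neq\pm y_i$ (the distances $0$ and $2r$ both differ from $1$), this span is a plane and $v_i=t_i\,(y_i\times x_i)$ for a scalar $t_i$. The row coming from the edge $y_iy_{i+1}$ reads $\langle y_i-y_{i+1},\,v_i-v_{i+1}\rangle=0$, which after using $v_i\perp y_i$ and $v_{i+1}\perp y_{i+1}$ becomes
\[
t_i\,\langle y_{i+1},\,y_i\times x_i\rangle+t_{i+1}\,\langle y_i,\,y_{i+1}\times x_{i+1}\rangle=0 .
\]
Because $x_i\in\operatorname{span}(y_i,y_{i+1})$ with a nonzero $y_{i+1}$-coefficient, $y_i\times x_i$ is a nonzero multiple of $y_i\times y_{i+1}$, so the first bracket vanishes; likewise the second bracket is a nonzero multiple of $\det[y_i,y_{i+1},y_{i+2}]$, which is nonzero since the three distinct points $y_i,y_{i+1},y_{i+2}$ lie on a circle in a plane off the origin and are therefore linearly independent. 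Hence $t_{i+1}=0$ for every $i$, so $v=0$ and $\partial F/\partial Y$ is invertible at $(a,b)$, which completes the plan.

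I expect the only genuine obstacle to be exactly this nondegeneracy step: a symmetric (or otherwise naive) choice of the pendant points leaves the closed odd chain with a nontrivial one-parameter ``rotational'' deformation that preserves all edge lengths, forcing $\det(\partial F/\partial Y)=0$. Anchoring each $x_i$ in the plane of the adjacent cycle edge removes precisely that deformation and collapses the kernel condition to the telescoping identity above; once this is in place, checking $F(a,b)=0$ and applying Theorem~\ref{implfunc} are routine.
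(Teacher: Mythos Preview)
Your argument is correct. Both your proof and the paper's set up the same map $F$ and reduce the lemma to checking that $\det\bigl(\partial F/\partial Y\bigr)\neq 0$ at one carefully chosen embedding, but the two diverge from there. The paper places the $y_i$ at height $z=h$ and the $x_i$ at height $z=-h$, both as vertex sets of a regular (star) $m$-gon; it then evaluates the determinant directly, using the cyclic symmetry $y_{i+1}=U_m y_i$, $x_{i+1}=U_m x_i$ to factor it as $2^{3m}\bigl(V^m+(V')^m\bigr)$ with $V=-\det[y_i;y_{i+1};x_i]$, $V'=\det[y_i;y_{i+1};x_{i+1}]$, and finishes by showing $V+V'\neq 0$ via a short case analysis (since $m$ is odd this forces $V^m+(V')^m\neq 0$). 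Your route is instead a kernel analysis with a different anchoring of the pendants: choosing each $x_i$ in $\operatorname{span}(y_i,y_{i+1})$ kills the $t_i$-coefficient in the cycle constraint, so the linear system becomes triangular and collapses immediately. The payoff of your choice is that no explicit determinant computation or case split is needed; the payoff of the paper's choice is full $\mathbb{Z}/m$-symmetry, which makes the determinant computable in closed form. One small remark: your closing heuristic that a ``symmetric'' placement of the $x_i$ would force a rotational kernel is not quite accurate---the paper's fully symmetric choice does yield $\det J\neq 0$, since the global rotation moves $X$ as well and is therefore not a deformation of $Y$ alone---but this does not affect the validity of your actual argument.
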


In other words, for any sufficiently small perturbation of pendant vertices, it is possible to find the embedding of the cycle vertices.

\begin{proof}
We provide the desired unit distance embedding explicitly. In what follows we slightly abuse the notation and write $x_i$ and $y_i$ for a vertex of the graph, the corresponding point on $S^2(r)$, and its 3-dimensional vector representation.
Consider the system of equations defining the embedding $G_k$ in $S^2(r)$:

\begin{equation}
\begin{cases}
f_i = \|y_i\|^2 - r^2 = 0, \quad 1\leq i\leq m; \\
f_{i+m} =\|y_i - y_{i+1}\|^2 - 1 = 0, \quad 1\leq i\leq m-1; \\
f_{2m} = \|y_m - y_{1}\|^2 - 1 = 0;\\
f_{i+2m} = \|x_i - y_i\|^2 - 1 = 0, \quad 1 \leq i \leq m.
\end{cases}
\label{embed_sys}
\end{equation}

Next, we will be interested in the family of embeddings, the $k=2$ case of which is depicted on Fig.~\ref{fig:my_label2}.

\begin{figure}[H]
    \centering
    \begin{tikzpicture}[line cap=round,line join=round,>=triangle 45,x=1.0cm,y=1.0cm, scale=0.50]
\draw(8,-6.68) circle (6.92cm);
\draw [rotate around={-1.43:(8.01,-5.85)},dotted] (8.01,-5.85) ellipse (6.84cm and 0.79cm);
\draw [rotate around={-1.29:(7.98,-8.13)},dotted] (7.98,-8.13) ellipse (6.75cm and 0.95cm);
\draw (12.38,-6.57)-- (1.7,-5.39);
\draw (1.7,-5.39)-- (13.42,-5.5);
\draw (13.42,-5.5)-- (4.07,-6.4);
\draw (4.07,-6.4)-- (7.56,-5.05);
\draw (7.56,-5.05)-- (12.38,-6.57);
\draw (13.42,-5.5)-- (5.22,-8.94);
\draw (4.07,-6.4)-- (6.9,-7.17);
\begin{scriptsize}
\draw[color=black] (5.08,-1.32) node {$S^2(r)$};
\draw[color=black] (4.38,-4.86) node {$z=h$};
\draw[color=black] (3.74,-8.46) node {$z=-h$};
\fill [color=black] (12.38,-6.57) circle (1.5pt);
\fill [color=black] (1.7,-5.39) circle (1.5pt);
\fill [color=black] (13.42,-5.5) circle (1.5pt);
\draw[color=black] (13.86,-5.24) node {$y_1$};
\fill [color=black] (4.07,-6.4) circle (1.5pt);
\draw[color=black] (3.76,-6.92) node {$y_2$};
\fill [color=black] (7.56,-5.05) circle (1.5pt);
\draw[color=black] (8,-4.78) node {$y_3$};
\fill [color=black] (5.22,-8.94) circle (1.5pt);
\draw[color=black] (5.66,-9.48) node {$x_1$};
\fill [color=black] (6.9,-7.17) circle (1.5pt);
\draw[color=black] (6.44,-7.64) node {$x_2$};
\end{scriptsize}
\end{tikzpicture}
    \caption{Unit distance embedding of $G_k$, the $k = 2$ case}
    \label{fig:my_label2}
\end{figure}
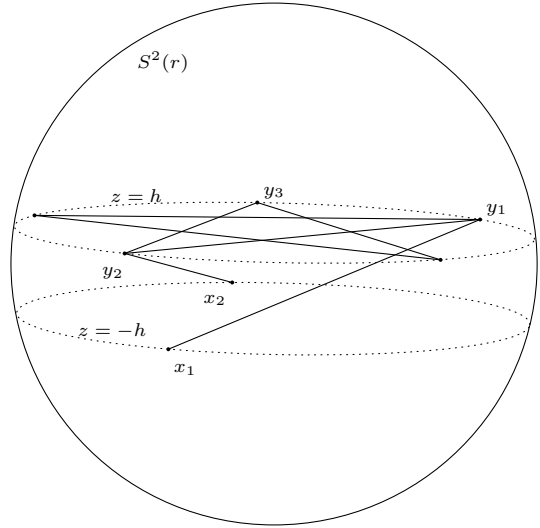

Note that~\eqref{embed_sys} allows $x_i$ to lie in $\mathbb{R}^3$, not only $S^2(r)$, but the cycle $y_1,\dots,y_m$ must lie on the sphere.

One can consider the function corresponding to the left-hand side of the system~\eqref{embed_sys}.
\[
F =  (f_1, \dots , f_{3m}) = F(x_{11}, x_{12}, x_{13},   \dots , x_{m3};\; y_{11}, \dots, y_{m3}) .
\]
Suppose that the Jacobian matrix $J = \left(\frac{\partial F}{ \partial Y}\right)$  is nondegenerate,
\[
\det J = \det \left(\frac{\partial F}{ \partial Y}\right) \neq 0,
\]
then the statement of the lemma follows from Theorem \ref{implfunc}. The rest of the proof is devoted to the calculation of this determinant.

The matrix $J$ has the following form (recall that $x_i$ and $y_i$ are $1\times 3$ vectors):
\begin{equation*}
J(X, Y) =
    2  \begin{pmatrix}
         y_1 & 0 & 0& 0 & \dots & 0\\
        0  &  y_2 & 0 & 0 & \dots & 0\\
        0 & 0 &  y_3 & 0 & \dots & 0\\
        \vdots & &  & \ddots & & \\
        0 & 0 & 0 & 0& \dots &  y_m\\
        y_1 - y_2 & y_2 - y_1 & \dots & 0 & \dots & 0\\
        0 & y_2 - y_3 & y_3 - y_2 & 0 & \dots & 0\\
        \vdots & & & & &\\
        y_1-y_m & 0 & \dots & \dots & 0 & y_{m}-y_1\\
        y_1-x_1 & 0 & \dots & \dots & 0 & 0\\
        0 & y_2-x_2 & 0 & \dots &  0 & 0\\
        \vdots& &\vdots & & \vdots&\\
        
        0& \dots & 0 & \dots & 0 & y_m-x_m\\
    \end{pmatrix}.
\end{equation*}

Subtracting some rows from each other, we get

\begin{multline*}
\det J = 2^{3m} \det
\begin{pmatrix}
         y_1 & 0 & 0& 0 & \dots & 0\\
        0  &  y_2 & 0 & 0 & \dots & 0\\
        0 & 0 &  y_3 & 0 & \dots & 0\\
        \vdots & &  & \ddots & & \\
        0 & 0 & 0 & 0& \dots &  y_m\\
        y_2 & y_1 & \dots & 0 & \dots & 0\\
        0 & y_3 &  y_2 & 0 & \dots & 0\\
        \vdots& & & \ddots & &\\
        y_m & \dots & 0 & \dots & 0 & y_1\\
        x_1 & 0 & 0& 0 & \dots & 0\\
        \vdots & &  & \ddots & & \\
        0 & 0 & 0 & 0& \dots &  x_m\\
    \end{pmatrix}
    =
    2^{3m} \det
    \begin{pmatrix}
         y_1 & 0 & 0& 0 & \dots & 0\\
         x_1 & 0 & 0 & 0 & \dots & 0\\
         y_2 & y_1 & 0 & \dots & \dots & 0\\
        0  &  y_2 & 0 & 0 & \dots & 0\\
        0  &  x_2 & 0 & 0 & \dots & 0\\
        0 &  y_3 &  y_2 & 0 & \dots & 0\\
        \vdots & &  & \ddots & & \\
        0 & 0 & 0 & 0& \dots &  y_m\\
        0 & 0 & 0 & 0& \dots &  x_m\\
        y_m & 0 & 0 & \dots & 0 & y_1\\
    \end{pmatrix} = \\
\end{multline*} 
\[
    = 2^{3m}\left(V_1 \dots V_m + V'_1 \dots V'_m\right),
\]
where
\[
    V_i = - \det \begin{pmatrix}
         y_i \\
         y_{i+1} \\
         x_i
    \end{pmatrix},
\quad\quad
V'_i =  \det \begin{pmatrix}
         y_i \\
         y_{i+1} \\
         x_{i+1}
    \end{pmatrix}.
\]

Now we fix the following embedding (Fig. \ref{fig:my_label2}). Let vertices $y_i$ lie in the plane $z = h$ (and form a regular $m$-gon), and vertices $x_i$ lie in the plane $z = -h$ (and also form a regular $m$-gon). 
Note that the radius of the circumcircle of the $m$-gon is greater than $1/2$, hence
\begin{equation}
    h < \left( \frac{1}{3} -  \frac{1}{4}\right)^{1/2} = \frac{1}{2 \sqrt{3}} < \frac{1}{2}.
    \label{h_est}
\end{equation}

Denote by $U_m$ the rotation matrix by an angle $2\pi/m$ counterclockwise around $z$--axis. Then $y_{i+1}$ = $U_m y_i$, $x_{i+1}$ = $U_m x_i$. Hence, all $V_i$ coincide and all $V'_i$ also coincide; put $V = V_i$ and $V' = V'_i$. Hence
\[
\det J = V^m + (V')^m.
\]

We claim that
\[
V+V' = \det \begin{pmatrix}
     y_1 \\
     y_2 \\
     x_2 - x_1
\end{pmatrix} \neq 0.
\]
Indeed, since $y_{13}=y_{23}=h$, $x_{13}=x_{23}=-h$, the equality
\[ 
\alpha y_1 + \beta y_2 + \gamma (x_2-x_1) = 0 
\]
implies $\alpha = - \beta$, i.e.
\begin{equation}
 \alpha(y_1-y_2) = \gamma(x_1 - x_2).
 \label{collinear}
\end{equation}
Recall that $\|y_1-y_2\| = \|x_1 - x_2\|=1$, so $\alpha=\pm\gamma$. 

Since both sets of points $\mathcal{X} = \{x_1, \dots , x_m\}$, $\mathcal{Y} = \{y_1, \dots , y_m\}$ form vertices of congruent regular $m$-gons, in the case  $\alpha=\gamma$, we have $x_1-x_2=y_1-y_2$ and the projections of $x_i$ and $y_i$ on the plane $z=0$ coincide, $i = 1,2, \dots, m$, and taking into account~\eqref{h_est}, we have
\[
\|x_1-y_1\| = 2h < 1.
\]

In the case $\alpha=-\gamma$, we have $x_1-x_2=y_2-y_1$ and the sets $\mathcal X$ and $\mathcal Y$ are symmetric about the origin. 
Then $x_1 x_2 y_1 y_2$ is a rectangle, and
\[
\|x_1-y_1\|^2 > \|x_1-x_2\|^2 + 4h^2 > 1.
\]
In both cases we got a contradiction.
Then the equation~\eqref{collinear} does not hold and so $V + V' \neq 0$. Hence 
\[
  \det J = V^m+(V')^m \neq 0
\]
as required.
\end{proof}

\section{Open questions}

\paragraph{Does the chromatic number of $S^2(r)$ <<almost>> grow with $r$?} 
Id est is the chromatic number monotonic except for at most countable set of values $r$?
Recall that the known results (see Table 1) allow for such possibility.

\begin{table}[H]
    \centering
    \begin{tabular}{|c|c|c|}
    \hline
        $r$ & Estimate for $\chi(r)=\chi(S^2(r))$ & Source  \\
        \hline
        $r<1/2$ & $\chi(r)=1$ &   \\
         \hline
        $r = 1/2$ & $\chi(r)=2$ & \\
         \hline
        $\frac{1}{2} < r\leq\frac{\sqrt{3-\sqrt{3}}}{2}$ & $\chi(r) = 4$ & Corollary 1\\
         \hline
        $r > \frac{\sqrt{3-\sqrt{3}}}{2}$ & $\chi(r)\geq 4$ & Theorem 2 \\
        \hline
        $r = \frac{\sqrt{5-\sqrt{5}}}{2\sqrt{2}}$  & $\chi(r) \geq 5$ & \cite{sphere5chr} \\
         \hline
        $r = \frac{1}{\sqrt 2}$ & $\chi(r)=4$ & \cite{simmons1976chromatic,Godsil} \\
        \hline        
        
        $r = \frac{\sqrt{5+\sqrt{5}}}{2\sqrt{2}}$  & $\chi(r) \geq 5$ & \cite{sphere5chr} \\
         \hline
        $r \leq \frac{1}{\sqrt{3}}$ & $\chi(r) \leq 5$ & \cite{simmons1976chromatic, Malen} \\
         \hline
        $r \leq \sqrt{3}/2$ & $\chi(r) \leq 6$ & \cite{Malen} \\
         \hline
        $r \geq 12.44$ & $\chi(r) \leq 7$ & \cite{Sirgedas} \\
         \hline
        $r > 1/2$ & $\chi(r)\leq 15$ & \cite{Coulson,RT} \\
         \hline
    \end{tabular}
    \caption{Lower and upper estimates for $\chi(S^2(r))$.}
    \label{sph_estimates}
\end{table}

\paragraph{Is there a proper coloring of $S^2(r)$ in $\chi(S^2(r))$ colors such that every color is dense?}
It is interesting that all known upper bounds are given by explicit colorings in which every color is a finite union of regions bounded by piecewise-continuous curves.

\paragraph{What is the minimal number of vertices in a subgraph $G$ of a sphere $S^2(r)$ with $\chi(G) = \chi (S^2(r))$?} 
By the de~Bruijn--Erd{\H o}s theorem this number is finite. Note that the proof of Theorem 2 does not give any finite 4-chromatic unit distance graph.

\begin{figure}[H]
    \centering
    \begin{tikzpicture}[line cap=round,line join=round,>=triangle 45,x=1cm,y=1cm, scale=0.5]
\draw [line width=1pt] (-1.44,-0.69) circle (5cm);
\draw [rotate around={-0.252402682289151:(-1.41,3.72)},line width=1pt] (-1.41,3.72) ellipse (2.2841542200439937cm and 0.25369371483087433cm);
\draw [shift={(0.8570946669734408,-0.4709200597124944)},line width=1pt]  plot[domain=2.3560716282226015:4.294548452653778,variable=\t]({1*5.710312926524084*cos(\t r)+0*5.710312926524084*sin(\t r)},{0*5.710312926524084*cos(\t r)+1*5.710312926524084*sin(\t r)});
\draw [shift={(-4.98251751172431,-0.19657932988760576)},line width=1pt]  plot[domain=-1.0006217325812896:0.6118818044703762,variable=\t]({1*6.525702456942265*cos(\t r)+0*6.525702456942265*sin(\t r)},{0*6.525702456942265*cos(\t r)+1*6.525702456942265*sin(\t r)});
\draw [shift={(-125.255550907007,-2.692768952421932)},line width=1pt,dash pattern=on 5pt off 5pt]  plot[domain=-0.024206098160666656:0.05386033886944003,variable=\t]({1*123.8317476654611*cos(\t r)+0*123.8317476654611*sin(\t r)},{0*123.8317476654611*cos(\t r)+1*123.8317476654611*sin(\t r)});
\draw (-2.5905418037294092,-0.8144522400481754) node[anchor=north west] {$2$};
\draw (-5.788493930220483,-0.1944411134835805) node[anchor=north west] {$1$};
\draw (2.638762566374616,-0.0068061672864004755) node[anchor=north west] {$3$};
\draw (-1.4647321265463276,5.3) node[anchor=north west] {$0$};
\begin{scriptsize}
\draw [fill=black] (-3.1802102034113178,3.567376995036643) circle (2.5pt);
\draw [fill=black] (-1.4600801593516382,-5.689959678557459) circle (2.5pt);
\draw [fill=black] (0.3592172383589387,3.551841993322389) circle (2.5pt);
\draw [fill=black] (-1.603373818776061,3.9736267171436577) circle (2.5pt);
\end{scriptsize}
\end{tikzpicture}
    \caption{4-coloring of the sphere. Here $s_0 \to 0$ as $r \to 1/2$}
    \label{fig:my_label3}
\end{figure}
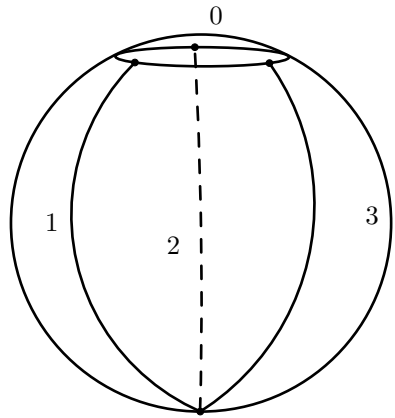

Let us focus on the case $r = 1/2 +\varepsilon$,  $\varepsilon \to 0$. Then the sphere can be colored in 4 colors in the way shown in Figure~\ref{fig:my_label3}.  Let us denote by $s_0$ the area of the spherical cap of color 0. Observe that $s_0 = 4\pi\varepsilon+o(\varepsilon)$, and thus, via averaging, we have the lower bound $n_4(r) \geq c\varepsilon^{-1}$ for some $c>0$, where $n_4(r)$ is the minimal number of vertices in a 4-chromatic unit distance graph. Can this obvious bound be refined?

\paragraph{Acknowledgements.} The research is supported by <<Native towns>>, a social investment program of PJSC <<Gazprom Neft>>, and by the program <<Leading Scientific Schools>> under grant NSh-775.2022.1.1.
We are grateful to Alexei Gordeev for helping to write the manuscript and to D{\"o}m{\"o}t{\"o}r P{\'a}lv{\"o}lgyi for comments that helped us improve the readability of the text. Finally, Andrey Kupavskii has significantly refined the explanation.

\bibliographystyle{plain}
\bibliography{main}

\end{document}